\newcommand{\versie}[2]{#1}
\newcommand{\fib}{\mbox{\sf fib}}
\newcommand{\Omicron}{O}
\newcommand{\bcdot}{$\discretionary{\mbox{$ \cdot $}}{}{}$}
\begin{document}

\title{DFAs and PFAs with Long Shortest Synchronizing Word Length}
\titlerunning{DFAs and PFAs with Long Shortest Synchronizing Word Length}
\toctitle{DFAs and PFAs with Long Shortest Synchronizing Word Length}
\author{Michiel de Bondt\inst{1} \and Henk Don\inst{2} \and Hans Zantema\inst{3,1}}
\authorrunning{M. de Bondt, H. Don, and H. Zantema}
\tocauthor{M.~de~Bondt, H.~Don, and H.~Zantema}
\institute{
Department of Computer Science, Radboud University Nijmegen, The Netherlands, \\
email: {\tt m.debondt@math.ru.nl}
\and
Department of Mathematics, Free University, Amsterdam, The Netherlands, \\
email: {\tt h.don@vu.nl}
\and
Department of Computer Science, TU Eindhoven, The Netherlands, \\
email: {\tt h.zantema@tue.nl}
}

\maketitle
\setcounter{footnote}{0}

\begin{abstract}
It was conjectured by \v{C}ern\'y in 1964, that a synchronizing DFA on $n$ states always has a synchronizing word of length at most $(n-1)^2$, and he gave a sequence of DFAs for which
this bound is reached. Until now a full analysis of all DFAs reaching this bound was only given for $n
\leq 4$, and with bounds on the number of symbols for $n \leq 10$. Here we give the full analysis for $n
\leq 6$, without bounds on the number of symbols.

For PFAs on $n\leq 6$ states we do a similar analysis as for DFAs and
find the maximal shortest synchronizing word lengths, exceeding $(n-1)^2$ for $n =4,5,6$. For arbitrary $n$
we use rewrite systems to construct a PFA on three symbols with exponential shortest synchronizing word
length, giving significantly better bounds than earlier exponential constructions. We give a transformation
of this PFA to a PFA on two symbols keeping exponential shortest synchronizing word length, yielding a
better bound than applying a similar known transformation.

\end{abstract}

\section{Introduction and Preliminaries}

A {\em deterministic finite automaton (DFA)} over a finite alphabet $\Sigma$ is called
{\em synchronizing}, if it admits a {\em synchronizing word}. A word $w \in \Sigma^*$ is
called {\em synchronizing} (or directed, or reset), if, starting in any state $q$, after
reading $w$, one always ends in one particular state $q_s$. So reading $w$ acts as a reset
button:
no matter in which state the system is, it always moves to the particular state $q_s$.
Now \v{C}ern\'y's conjecture \cite{C64} states:
\begin{quote}
Every synchronizing DFA on $n$ states admits a synchronizing word of length $\leq (n-1)^2$.
\end{quote}

Surprisingly, despite extensive effort, this conjecture is still open, and even the best known upper
bounds are still cubic in $n$. In 1983 Pin \cite{pin} established the bound $\frac{1}{6}(n^3-n)$\versie{, based on \cite{frankl}}{}. Only very recently a slight improvement was claimed by Szyku\l{}a \cite{szykula}. For a survey on synchronizing automata and \v{C}ern\'y's conjecture, we refer to \cite{volkov}.

Formally, a {\em deterministic finite automaton (DFA)} over a finite alphabet $\Sigma$ consists of a finite set
$Q$ of states and a map $\delta: Q \times \Sigma \to Q$.\footnote{For synchronization the
initial state and the set of final states in the standard definition may be ignored.}
For $w \in \Sigma^*$ and $q \in Q$, we define $qw$ inductively by
$q \lambda = q$ and $q w a = \delta(qw,a)$ for
$a \in \Sigma$, where $\lambda$ is the empty word.
So $qw$ is the state where one ends, when starting in $q$ and reading the symbols in $w$ consecutively, and $qa$ is a short hand notation for
$\delta(q,a)$. A word $w \in \Sigma^*$ is called {\em synchronizing}, if a
state $q_s \in Q$ exists such that $q w = q_s$ for all $q \in Q$. 

In \cite{C64}, \v{C}ern\'y already gave DFAs for which the bound of the conjecture is attained:
for $n \geq 2$ the DFA $C_n$ is defined
to consist of $n$ states $1,2,\ldots,n$, and two symbols $a,b$, acting by
$qa = q+1$ for $q = 1,\ldots,n-1$, $\delta(n,a) = 1$, and  $qb = q$ for
$q = 2,\ldots,n$, $1b = 2$.

\begin{wrapfigure}{r}{4cm}
\begin{tikzpicture}
\useasboundingbox (-1,-0.5) rectangle (3,1.8);
\node[circle,draw,inner sep=0pt,minimum width=5mm] (1) at (0,2) {$1$};
\node[circle,draw,inner sep=0pt,minimum width=5mm] (2) at (2,2) {$2$};
\node[circle,draw,inner sep=0pt,minimum width=5mm] (3) at (2,0) {$3$};
\node[circle,draw,inner sep=0pt,minimum width=5mm] (4) at (0,0) {$4$};
\draw[->] (1) -- node[above,inner sep=1pt] {$a,b$} (2);
\draw[->] (2) -- node[right,inner sep=2pt] {$a$} (3);
\draw[->] (3) -- node[below,inner sep=3pt] {$a$} (4);
\draw[->] (4) -- node[left,inner sep=2pt] {$a$} (1);
\draw[->] (2) edge[out=90,in=0,looseness=7] node[right,inner sep=4pt] {$b$} (2);
\draw[->] (3) edge[out=-90,in=0,looseness=7] node[right,inner sep=4pt] {$b$} (3);
\draw[->] (4) edge[out=-90,in=-180,looseness=7] node[left,inner sep=4pt] {$b$} (4);
\node at (1,-0.9) {$C_4$};
\end{tikzpicture}
\end{wrapfigure}

For $n=4$, this is depicted on the right.
For $C_n$, the string $w = b (a^{n-1}b)^{n-2}$ of length $|w| = (n-1)^2$ satisfies
$qw = 2$ for all $q
\in Q$, so $w$ is synchronizing. No shorter synchronizing word exists for
$C_n$, as is shown in \cite{C64}, showing that the bound in \v{C}ern\'y's conjecture is sharp.

A DFA on $n$ states is {\em critical}, if its shortest synchronizing word has length $(n-1)^2$. 
One goal of this paper is to investigate all critical DFAs up to some size.
To exclude infinitely many trivial extensions, we only consider {\em basic} DFAs: no two distinct
symbols act in the same way in the automaton, and no symbol acts as the identity. Obviously, adding
the identity or copies of existing symbols has no influence on synchronization.

An extensive investigation was already done by Trahtman in \cite{T06}:
by computer support and clever algorithms,
all critical DFAs on $n$ states and $k$ symbols were
investigated for $3 \leq n \leq 7$ and $k \leq 4$, and for $n=8,9,10$ and $k=2$. Here, a
minimality requirement was added: examples were excluded if criticality may be kept after
removing one symbol. Then up to isomorphism there are exactly 8 of them, apart
from the basic \v{C}ern\'y examples: 3 with 3 states, 3 with 4, one with 5 and one with
6. In \cite{DZ16}, the minimality requirement and restrictions on alphabet size were dropped and several more examples were found
that Trahtman originally expected not to exist.
All these are extensions of known examples: in total there are exactly 15 basic critical DFAs for $n=3$ and
exactly 12 basic critical DFAs for $n=4$. In this paper, we show that for $n=5,6$, no more critical DFAs exist
than the four known ones, without any restriction on the number of symbols.

A generalization of a DFA is a {\em Partial Finite Automaton} (PFA); the only difference is that now the
transition function $\delta$ is allowed to be partial. In a PFA, $qw$ may be undefined, in fact it is only defined if every step is
defined. A word $w \in \Sigma^*$ is called {\em carefully synchronizing} for a PFA, if a
state $q_s \in Q$ exists such that $q w$ is defined and  $q w = q_s$ for all $q \in Q$. Stated in words:
starting in any state $q$ and reading $w$, every step is defined and one always ends in state $q_s$.
As being a generalization of DFAs, the shortest carefully synchronizing word may be longer. For $n=4,5,6$
we show that this is indeed the case by finding the maximal shortest carefully synchronizing word length
to be 10, 21 and 37, respectively. The maximal length grows exponentially in $n$, as was already observed by Rystsov \cite{rystsov}. Martyugin \cite{M10} established the lower bound $\Omega(3^{n/3})$ with a construction in which the number of symbols is linear in $n$. In a recent paper, the upper bound $O((3+\varepsilon)^\frac{n}{3})$ was proved \cite{gusev}.

Until recently it was an open question if exponential lower bounds can be achieved with a constant alphabet size. We answer this question by giving a
construction of a PFA on $n$ states and three symbols with exponential shortest synchronizing word length. The key idea is that synchronization is forced to mimic exponentially many string
rewrite steps, similar to binary counting. Our three-symbol PFA can be transformed to a two-symbol PFA by a
standard construction for which we develop a substantial improvement.
Independent of our work, recently in \cite{V16} it was shown that exponential bounds exist for every constant alphabet
size and for two symbols the bound $\Omega(2^{n/35})$ was given. Our basic construction strongly improves this and
gives length $\Omega(\phi^{n/3})$ for the three-symbol PFA and length $\Omega(\phi^{n/5})$ for the
two-symbol PFA, where $\phi = \frac{1+ \sqrt{5}}{2}$. Some optimizations yield further
improvements. 

The basic tool to analyze (careful) synchronization is the {\em power automaton}.
For any DFA or PFA $(Q,\Sigma, \delta)$, its power automaton is the DFA $(2^Q,\Sigma, \delta')$
where $\delta' : 2^Q \times \Sigma \to 2^Q$ is defined by
$\delta'(V,a) = \{q \in Q \mid \exists p \in V : \delta(p,a) = q \}$, if $\delta(p,a)$ is defined for
all $p \in V$, otherwise $\delta'(V,a) = \emptyset$.
For any $V \subseteq Q, w \in \Sigma^*$, we define $Vw$ as above, using $\delta'$ instead of
$\delta$.  From this definition, one easily proves that $Vw = \{ qw \mid q \in V \}$ if $qw$ is defined for 
all $q \in V$, otherwise $Vw = \emptyset$, for any $V \subseteq Q, w \in \Sigma^*$.
A set of the shape $\{q\}$ for $q \in Q$ is called a {\em singleton}.
So a word $w$ is (carefully) synchronizing, if and only if $Qw$ is a singleton.
Hence a DFA (PFA) is (carefully) synchronizing, if and only if its
power automaton admits a path from $Q$ to a singleton, and the shortest
length of such a path corresponds to the shortest length of a (carefully) synchronizing word.

This paper is organized as follows. In Section \ref{secdfa} we describe our exhaustive analysis of
DFAs on at most 6 states. In Section \ref{secnfa} we give our results for PFAs on at most 6 states.
In Section \ref{secexp} we present our construction of PFAs on three symbols with
exponential shortest carefully synchronizing word length.
In Section \ref{sectwos} we improve the transformation used by Martyugin \cite{M10} and Vorel \cite{V16} to reduce to alphabet size two. Section \ref{secopt} discusses optimizations. We conclude in
Section \ref{secconcl}.

\section{Critical DFAs on at Most 6 States}
\label{secdfa}

A natural question when studying \v{C}ern\'y's conjecture is: what can be said about automata in which the
bound of the conjecture is actually attained, the so-called critical automata?
Throughout this section we restrict ourselves to basic DFAs. As has already been noted by several authors
\cite{volkov,T06,DZ16}, critical DFAs are rare. There is only one construction known which gives a critical
DFA for each $n$, namely the well-known sequence $C_n$, discovered by and named after \v{C}ern\'y \cite{C64}.
Apart from this sequence, all known critical DFAs have at most 6 states. In \cite{DZ16}, all critical DFAs
on less than 5 states were identified, without restriction on the size of the alphabet. For $n=5$ and 6
it was still an open question if there exist critical (or even supercritical) DFAs,
other than those already discovered by \v{C}ern\'y, Roman \cite{R08} and Kari \cite{K01}. In this paper we
verify that this is not the case, so for $n=5$ only two critical DFAs exist (\v{C}ern\'y, Roman) and also for
$n=6$ only two exist (\v{C}ern\'y, Kari). In fact our results also prove the following theorem (previously only known for $n\leq 5$, see \cite{CPR71}):

\begin{theorem}
Every synchronizing DFA on $n\leq 6$ states admits a synchronizing word of length at most $(n-1)^2$.
\end{theorem}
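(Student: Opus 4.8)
The plan is to obtain the statement as an immediate corollary of the exhaustive classification announced for this section: once we verify that no DFA on $n \le 6$ states has a shortest synchronizing word longer than $(n-1)^2$, the theorem follows. Since the cases $n \le 4$ are already settled in \cite{DZ16}, the genuine content is $n = 5$ and $n = 6$. The whole difficulty is that the alphabet is unbounded, so the class of DFAs on $n$ states is a priori infinite; the heart of the argument is to reduce this to a finite, and then computationally feasible, verification.

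First I would reduce to basic DFAs at no cost: duplicating a symbol or adjoining the identity changes neither synchronizability nor the shortest synchronizing word length. Next I would record the monotonicity principle, which follows directly from the power-automaton description in the preliminaries: enlarging the alphabet only adds edges to the power automaton, hence can only decrease or leave unchanged the distance from $Q$ to a singleton; equivalently, the shortest synchronizing word length never increases when symbols are added. Combined with the observation that a shortest synchronizing word traces a path $Q = V_0, V_1, \ldots, V_L$ of necessarily \emph{distinct} subsets (a repeated subset would allow a strictly shorter word), so that $L \le 2^n - 1$, this lets me pass from any DFA $D$ with shortest word length $L$ to the sub-DFA consisting only of the symbols occurring in one fixed shortest word: that sub-DFA has the same length $L$ and uses at most $L \le 2^n-1$ distinct symbols. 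Hence the maximal shortest synchronizing word length over all DFAs on $n$ states is already attained by a DFA on at most $2^n-1$ symbols, and the search is finite.

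The finite search is still far too large for naive enumeration, since there are $\binom{n^n}{2^n-1}$ candidate symbol sets, so the main obstacle — the part requiring the most care — is organizing it to terminate in reasonable time while remaining provably exhaustive. Here I would exploit the structure of the synchronizing path more aggressively: along $V_0, \ldots, V_L$ the cardinality $|V_i|$ is non-increasing and falls from $n$ to $1$ in exactly $n-1$ strict steps, so the path splits into $n-1$ rank-dropping transitions separated by runs of rank-preserving ones. This sharply constrains which transformations can be useful and suggests building candidate DFAs incrementally, one symbol at a time, pruning any branch as soon as the current alphabet already forces a short synchronizing word, since by monotonicity further symbols can never lengthen it. For each surviving candidate the shortest synchronizing word length is computed by breadth-first search on the power automaton, which has at most $2^n \le 64$ vertices and is therefore cheap.

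Finally I would run this search for $n = 5$ and $n = 6$ and confirm that every DFA has shortest synchronizing word length at most $(n-1)^2$, with equality realized only by the already-known examples (\v{C}ern\'y and Roman for $n=5$; \v{C}ern\'y and Kari for $n=6$). Together with the settled cases $n \le 4$, this covers all $n \le 6$. I expect the genuinely hard step to be neither the monotonicity reduction nor the power-automaton breadth-first search, both of which are routine, but the design of a pruning scheme that is simultaneously sharp enough to keep the unbounded-alphabet search within computational reach and transparent enough that its exhaustiveness can be trusted.
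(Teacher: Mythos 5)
Your overall architecture coincides with the paper's: restrict to basic DFAs, invoke the monotonicity of shortest synchronizing word length under adding symbols (the paper's Property~\ref{property:extension}), and run an incremental, symbol-by-symbol depth-first search over DFAs with pruning, finishing by computer verification for $n=5,6$. Your observation that the states along a shortest synchronizing path in the power automaton are pairwise distinct, so that the extremal length is attained by a DFA on at most $2^n-1$ symbols, is a nice finiteness argument, though not strictly needed (a basic DFA on $n$ states has at most $n^n-1$ symbols anyway, so the search space was already finite; the issue is feasibility, not finiteness).

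The genuine gap is that your only pruning rule fires when the current partial DFA is \emph{already synchronizing} with a subcritical word. You give no way to cut off a branch rooted at a \emph{non-synchronizing} DFA, and this is exactly where the combinatorial explosion lives: almost every small alphabet yields a non-synchronizing DFA, and each such DFA has tens of thousands of one-symbol extensions. You explicitly flag the design of the pruning scheme as the hard step and then leave it open, but for a computational proof this is the proof. The paper's essential ingredient, which you are missing, is an upper bound $L$ on the shortest synchronizing word length of \emph{any synchronizing extension} of a given non-synchronizing DFA $\mathcal{A}$, computed from the power automaton of $\mathcal{A}$ alone: take $m$ to be the distance from $Q$ to a smallest reachable set, and for each cardinality $k$ combine the number and diameters of the strongly connected components of the irreducible $k$-sets ($m_k$) with the maximal shortest reduction word length over reducible $k$-sets ($l_k$), giving $L = m+\sum_{k}(m_k+l_k)$; if $L<(n-1)^2$, the entire branch above $\mathcal{A}$ is discarded. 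Without such a bound (plus, in practice, the factor-$n!$ symmetry reduction the paper mentions), the search does not terminate in feasible time, so the theorem is not actually established. A small secondary slip: the cardinality $|V_i|$ along the path drops from $n$ to $1$ in \emph{at most} $n-1$ strict steps, not exactly $n-1$, since a single symbol can merge more than two states at once.
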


\versie{As Trahtman already noted in his paper \cite{T06}, for $n\geq 6$ there seems to be a gap in the range of
possible shortest synchronization lengths. For example, his analysis showed that there are no DFAs on 6
states with shortest synchronizing word length 24, when restricting to at most 4 symbols. Our analysis
shows that this is true without restriction on the alphabet: there is no DFA on 6 states with shortest
synchronizing word length 24. For $n\leq 6$ all other lengths are feasible. If $n\leq 6$ and
$1\leq k\leq (n-1)^2$, $k\neq 24$, then there exists a DFA on $n$ states with shortest synchronizing word
length exactly $k$.}{}

As the number of DFAs on $n$ states grows like $2^{n^{n}}$, an exhaustive search is a non-trivial affair,
even for small values of $n$. The problem is that the alphabet size in a basic DFA can be as large as
$n^n-1$. Up to now for $n=5,6$ only DFAs with at most four symbols were checked by Trahtman \cite{T06}.
Here we give describe our algorithm to investigate all DFAs on 5 and 6 states, without restriction on
the alphabet size.

Before explaining the algorithm, we introduce some terminology. A DFA $\mathcal{B}$ obtained by adding
some symbols to a DFA $\mathcal{A}$ will be called an \emph{extension} of $\mathcal{A}$. If $\mathcal{A} =
(Q,\Sigma,\delta)$, then $S\subseteq Q$ will be called \emph{reachable} if there exists a word
$w\in\Sigma^*$ such that $Qw=S$. We say
that $S$ is \emph{reducible} if there exists a word $w$ such that $|Sw|<|S|$, and we call $w$ a
\emph{reduction word} for $S$. Our algorithm is mainly based on the following observation:
\begin{property}\label{property:extension}
If a DFA $\mathcal{A}$ is synchronizing, and $\mathcal{B}$ is an extension of $\mathcal{A}$, then
$\mathcal {B}$ is synchronizing as well and its shortest synchronizing word is at most as long as the
shortest synchronzing word for $\mathcal{A}$.
\end{property}

The algorithm roughly runs as follows. We search for (super)critical DFAs on $n$ states, so a DFA is
discarded if it synchronizes faster, or if it does not synchronize at all. For a given DFA
$\mathcal{A} = (Q,\Sigma,\delta)$ which is not yet discarded or investigated, the algorithm does the
following:
\begin{enumerate}
\item If $\mathcal{A}$ is synchronizing and (super)critical, we have identified an example we are
searching for.
\item If $\mathcal{A}$ is synchronizing and subcritical, it is discarded, together with all its
possible extensions (justified by Property \ref{property:extension}).
\item If $\mathcal{A}$ is not synchronizing, then find an upper bound $L$ for how fast any synchronizing
extension of $\mathcal{A}$ will synchronize (see below). If $L < (n-1)^2$, then discard $\mathcal{A}$
and all its extensions. Otherwise, discard only $\mathcal{A}$ itself.
\end{enumerate}

The upper bound $L$ for how fast any synchronizing extension of $\mathcal{A}$ will synchronize, is
found by analyzing distances in the directed graph of the power automaton of $\mathcal{A}$. For
$S,T\subseteq Q$, the distance from $S$ to $T$ in this graph is equal to the length of the shortest
word $w$ for which $Sw=T$, if such a word exists.
We compute $L$ as follows:
\begin{enumerate}
\item Determine the size $|S|$ of a smallest reachable set. Let $m$ be the minimal distance from $Q$
to a set of size $|S|$.
\item For each $k\leq |S|$, partition the collection of irreducible sets of size $k$ into strongly
connected components. Let $m_k$ be the number of components plus the sum of their diameters.
\item For each reducible set of size $k\leq |S|$, find the length of its shortest
reduction word. Let $l_k$ be the maximum of these lengths.
\item Now note that a synchronizing extension of $\mathcal{A}$ will have a synchronizing
word of length at most
\[ L \; = \; m+\sum_{k=2}^{|S|}(m_k+l_k).  \]
\end{enumerate}

The algorithm performs a depth-first search. So after investigating a DFA, first all its extensions (not
yet considered) are investigated before moving on.
Still, we can choose which extension to pick first. We would like to choose
an extension that is likely to be discarded immediately together with all its extensions. Therefore,
we apply the following heuristic: for each possible extension $\mathcal{B}$ by one symbol, we count how
many pairs of states in $\mathcal{B}$ would be reducible. The extension for which this is maximal is
investigated first. The motivation is that a DFA is synchronizing if and only if each pair is reducible \cite{C64}.

Finally, we note that we have described a primitive version of the algorithm here. The algorithm
which has actually been used also takes symmetries into account, making it almost $n!$ times faster. \versie{}{For the source code, we refer to \cite{BDZ17}.}

\section{PFAs on at Most 6 States}
\label{secnfa}
In the remainder of this paper, we study PFAs and shortest carefully synchronizing word lengths. In this section, we focus on PFAs on at most 6 states. In
the next section, we construct PFAs with shortest carefully synchronizing words of exponential lengths for general $n$.

To find PFAs with small number of states and long shortest carefully synchronizing word, we exploit that
Property \ref{property:extension} also holds for PFAs. However, for PFAs it is not true that reducibility
of all pairs of states guarantees careful synchronization. Therefore, we apply a different search
algorithm.
\versie{In fact, we just choose the symbols of a long shortest synchronizing word from left to right.
More precisely, the symbols which are on the stack of the search function are always a prefix
of a possible synchronizing word. The search is pruned in the following three cases,
where $w$ is the prefix on the stack:
\begin{enumerate}

\item There exists a word $u$ consisting of the letters of $w$,
with $|u| < |w|$, such that either $Qu = Qw$, or $Qu$ and $Qw$ are both singletons;

\item The automaton $\mathcal{A}$, whose symbols are the letters
of $w$, has a synchronizing word which is smaller than the word length where we are targeting on;

\item The value of the upper bound $L$ for the automaton $\mathcal{A}$
is smaller than the word length where we are targeting on.

\end{enumerate}
If we just choose any possible symbol for each subsequent letter, the algorithm gives too many solution.
To reduce the number of solutions and speed up the algorithm even further, we only select symbols as follows,
where $w$ is the prefix on the stack and $a$ is a candidate new symbol:
\begin{enumerate}

\item If $Qwa = Qwb$ for a letter $b$ of $w$, then $a$ must be the first such letter $b$ (otherwise we prune
the search);

\item If $Qwa = Qwb$ does not hold for any letter $b$ of $w$, then $a$ must be undefined outside
$Qw$ (otherwise we prune the search).

\end{enumerate}
Note that we minimize the effect to $\mathcal{A}$ of adding symbol $a$ to it,
where we only may choose another symbol $a$ if the subset $Qwa$ will not be affected.
In 1., $\mathcal{A}$ even stays the same after adding symbol $a$. In 2., symbol $a$ is only
defined for states where it is needed, which restricts the applicability of $a$.

Due to 2. above, the algorithm did not find a solution of length $37$ with only
$6$ symbols for $n = 6$. But postprocessing all solutions for $n = 6$ did reveal
a solution of length $37$ with only $6$ symbols indeed.}{Technical details are discussed
in \cite{BDZ17}.}

For $n\leq 6$, our algorithm has identified the maximal
length of a shortest carefully synchronizing word in a PFA on $n$ states. The results are:
\[
\renewcommand{\arraystretch}{1.4}
\begin{array}{|c|c|c|c|c|c|}
\hline
\quad n\quad&\quad 2\quad&\quad3\quad&\quad4\quad&\quad5\quad&\quad6\quad\\
\hline
\quad\text{maximal\ length}\quad&1&4&10&21&37 \\
\hline
\end{array}
\vspace{3pt}
\]
We observe that PFAs exist for $n=4,5,6$ with shortest carefully synchronizing word lengths exceeding $(n-1)^2$.
Note that for $n=5,6$ this even exceeds the  Pin-Frankl bound $\frac{1}{6}(n^3-n)$ for DFAs from \cite{pin}.
Where for $n\geq 5$ no critical DFAs are known with more than three symbols, PFAs with long shortest carefully
synchronizing word lengths tend to have more symbols: for $n=4,5,6$ states the minimal numbers of symbols
achieving the maximal shortest carefully synchronizing word lengths 10, 21 and 37 are 3, 6, 6,
respectively. Below we give examples of PFAs on 4, 5 and 6 states reaching these lengths.
\begin{center}
\begin{tikzpicture}
\useasboundingbox (0,-0.5) rectangle (3,2.5);
\node[circle,draw,inner sep=0pt,minimum width=3mm] (1) at (0,2) {};
\node[circle,draw,inner sep=0pt,minimum width=3mm] (2) at (2,2) {};
\node[circle,draw,inner sep=0pt,minimum width=3mm] (3) at (2,0) {};
\node[circle,draw,inner sep=0pt,minimum width=3mm] (4) at (0,0) {};
\draw[->] (1) -- node[above,inner sep=1pt] {$a,c$} (2);
\draw[->] (2) -- node[right,inner sep=2pt] {$b$} (3);
\draw[->] (3) -- node[below,inner sep=3pt] {$b,c$} (4);
\draw[->] (4) -- node[left,inner sep=2pt] {$b,c$} (1);
\draw[->] (2) edge[out=90,in=0,looseness=10] node[right,inner sep=4pt] {$a$} (2);
\draw[->] (3) edge[out=-90,in=0,looseness=10] node[right,inner sep=4pt] {$a$} (3);
\draw[->] (4) edge[out=-90,in=-180,looseness=10] node[left,inner sep=4pt] {$a$} (4);
\end{tikzpicture}
\begin{tikzpicture}
\node[circle,draw,inner sep=0pt,minimum width=3mm] (0) at (0,1) {};
\node[circle,draw,inner sep=0pt,minimum width=3mm] (1) at (1.732,2) {};
\node[circle,draw,inner sep=0pt,minimum width=3mm] (2) at (1.732,0) {};
\node[circle,draw,inner sep=0pt,minimum width=3mm] (3) at (3.464,1) {};
\node[circle,draw,inner sep=0pt,minimum width=3mm] (4) at (5.196,1) {};
\draw[->] (0) edge[out=15,in=-135] node[pos=0.55,below,inner sep=3pt] {$b$} (1);
\draw[->] (1) edge[out=-165,in=45] node[pos=0.65,above,inner sep=9pt] {$a,d,e,f$} (0);
\draw[->] (1) -- node[right,inner sep=2pt] {$c$} (2);
\draw[->] (2) -- node[pos=0.55,below,inner sep=4pt] {$c$} (0);
\draw[->] (3) -- node[pos=0.45,above,inner sep=3pt] {$c$} (1);
\draw[->] (2) edge[out=45,in=-165] node[pos=0.45,above,inner sep=3pt] {$d$} (3);
\draw[->] (3) edge[out=-135,in=15] node[pos=0.45,below,inner sep=4pt] {$e$} (2);
\draw[->] (4) edge[out=165,in=15] node[above,inner sep=2pt] {$e$} (3);
\draw[->] (3) edge[out=-15,in=-165] node[below,inner sep=3pt] {$f$} (4);
\draw[->] (0) edge[out=225,in=135,looseness=10] node[left,inner sep=2pt] {$a$} (0);
\draw[->] (2) edge[out=225,in=315,looseness=10] node[pos=0.2,left,inner sep=3pt] {$a,b$} (2);
\draw[->] (3) edge[out=135,in=45,looseness=10] node[above,inner sep=2pt] {$a,b$} (3);
\draw[->] (4) edge[out=315,in=45,looseness=10] node[right,inner sep=3pt] {$a,b,c,d$} (4);
\end{tikzpicture}
\end{center}
The left one has two synchronizing words of length 10:
$abcabab\bcdot(b+c)ca$. The right one has unique shortest synchronizing word $\mathit{abcabdbebcabdbfbcdeca}$ of length 21.
\begin{center}
\begin{tikzpicture}
\node[circle,draw,inner sep=0pt,minimum width=3mm] (0) at (0,1) {};
\node[circle,draw,inner sep=0pt,minimum width=3mm] (1) at (2.464,1) {};
\node[circle,draw,inner sep=0pt,minimum width=3mm] (2) at (3.464,2) {};
\node[circle,draw,inner sep=0pt,minimum width=3mm] (3) at (3.464,0) {};
\node[circle,draw,inner sep=0pt,minimum width=3mm] (4) at (5.196,1) {};
\node[circle,draw,inner sep=0pt,minimum width=3mm] (5) at (6.928,1) {};
\draw[->] (0) edge[out=45,in=-180] node[above]{$b$} (2);
\draw[->] (1) -- node[pos=0.40,above,inner sep=2pt]{$a,b,d,e,f$} (0);
\draw[->] (1) -- node[pos=0.40, below,inner sep=4pt]{$c$} (3);
\draw[->] (2) -- node[pos=0.60,above,inner sep=4pt]{$b$} (1);
\draw[->] (3) edge[out=-180,in=-45] node[below]{$c$} (0);
\draw[->] (3) edge[out=15,in=-135] node[pos=0.55,below,inner sep=4pt]{$d$}(4);
\draw[->] (4) -- node[pos=0.45,above,inner sep=3pt]{$c$}(2);
\draw[->] (4) edge[out=-165,in=45] node[pos=0.55,above,inner sep=4pt]{$e$} (3);
\draw[->] (4) edge[out=-15,in=-165] node[below,inner sep=3pt] {$f$} (5);
\draw[->] (5) edge[out=165,in=15] node[above,inner sep=2pt] {$e$}(4);
\draw[->] (0) edge[out=225,in=135,looseness=10] node[left,inner sep=2pt] {$a$} (0);
\draw[->] (2) edge[out=135,in=45,looseness=10] node[pos=0.2,left,inner sep=2pt] {$a$} (2);
\draw[->] (3) edge[out=225,in=315,looseness=10] node[pos=0.2,left,inner sep=3pt] {$a,b$} (3);
\draw[->] (4) edge[out=135,in=45,looseness=10] node[above,inner sep=2pt] {$a,b$} (4);
\draw[->] (5) edge[out=315,in=45,looseness=10] node[right,inner sep=3pt] {$a,b,c,d$} (5);
\end{tikzpicture}
\end{center}
The shortest synchronizing word is  $ab^2ab^2cb^2ab^2db^2eb^2cb^2ab^2db^2fb^2cdecb^2a$ for this PFA on 6 states. It is unique and has length 37.

\section{Exponential Bounds for PFAs}
\label{secexp}
In this section, we construct for any $k \geq 3$ a strongly connected PFA on $n = 3k$ states and three symbols,
for which we show that it is
carefully synchronizing, and the shortest carefully synchronizing word has length $\Omega(\phi^{n/3})$
for $\phi = \frac{1+ \sqrt{5}}{2} = 1.618\cdots$. The set of states is
$Q = \{A_i, B_i, C_i \mid i = 1,\ldots,k\}$.
If a set $S \subseteq Q$ contains exactly one element of $\{A_i, B_i, C_i\}$ for every $i$,
it can be represented by a string over $\{A,B,C\}$ of length $k$. The idea of our
construction is that the PFA will mimic rewriting the string $C^2 A^{k-2}$ to the string
$C^2 A^{k-3} B$ with respect to the rewrite system $R$, which consists of the following three rules
\[ BBA \to AAB, \; CBA \to CAB, \; CCA \to CCB.\]
The key argument is that this rewriting is possible, but requires an exponential number of
steps.
This is elaborated in the following lemma, in which we use $\to_R$ for rewriting with respect to
$R$, that is, $u \to_R v$, if and only if $u = u_1 \ell u_2$ and $v = u_1 r u_2$, for strings
$u_1,u_2$ and a rule $\ell \to r$ in $R$. Its transitive closure is denoted by $\to_R^+$.
We write $\fib$ for the standard fibonacci function, defined by
$\fib(i) = i$ for $i=0,1$, and $\fib(i) = \fib(i-1) + \fib(i-2)$ for $i > 1$. It is well-known
that $\fib(n) = \Theta(\phi^n)$.

\begin{lemma}
\label{lemrewrlen}
For $k \geq 3$, we have $CCA^{k-2} \to_R^+ CCA^{k-3}B$. Furthermore,
the smallest possible number of steps for rewriting $CCA^{k-2}$ to a string ending in $B$, is
exactly $ \fib(k)-1$.
\end{lemma}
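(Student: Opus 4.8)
The plan is to first eliminate the letter $C$, reducing to a system on $\{A,B\}$, and then to introduce a single Fibonacci-weighted potential that pins down the number of steps exactly.

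\emph{Reduction.} No rule of $R$ creates, destroys, or moves a $C$: in each rule the $C$'s occupy the same positions on both sides. Hence along any rewriting starting from $CCA^{k-2}$ the two $C$'s stay frozen in positions $1,2$, and every reachable string has the form $CCs$ with $s\in\{A,B\}^{m}$, $m=k-2$. Writing $s=s_1\cdots s_m$, a short analysis of where each left-hand side can sit shows that the rules of $R$ induce exactly three moves on $s$: $\rho_1\colon s_1=A\mapsto B$ (from $CCA\to CCB$), $\rho_2\colon (s_1,s_2)=(B,A)\mapsto(A,B)$ (from $CBA\to CAB$), and $\rho_3\colon (s_i,s_{i+1},s_{i+2})=(B,B,A)\mapsto(A,A,B)$ for any $1\le i\le m-2$ (from $BBA\to AAB$). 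The confinement of $\rho_1,\rho_2$ to the front and the absence of any $C$ inside $s$ are what make this list complete. The target $CCA^{k-3}B$ corresponds to $s=A^{m-1}B$, and ``ending in $B$'' means $s_m=B$.

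\emph{Exact step count via a potential.} Set $c_j=\fib(j+2)-1$ for $1\le j\le m$ and define $\Phi(s)=\sum_{j:\,s_j=B}c_j$. The key computation is that each of the three moves increases $\Phi$ by exactly $1$: for $\rho_1$ this is $c_1=\fib(3)-1=1$; for $\rho_2$ it is $c_2-c_1=1$; and for $\rho_3$ it is $c_{i+2}-c_{i+1}-c_i=\fib(i+4)-\fib(i+3)-\fib(i+2)+1=1$, using the Fibonacci recurrence. Consequently the length of \emph{any} rewriting $s\to_R^+ s'$ equals $\Phi(s')-\Phi(s)$. Starting from $A^m$, where $\Phi=0$, any $s'$ with $s'_m=B$ satisfies $\Phi(s')\ge c_m=\fib(m+2)-1=\fib(k)-1$, since the term $c_m$ is present and all $c_j\ge 0$, with equality exactly when $s'=A^{m-1}B$. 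This is the lower bound $\fib(k)-1$ for reaching any string ending in $B$.

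\emph{Reachability.} It remains to show $A^m\to_R^+ A^{m-1}B$, which also proves the first assertion. I would prove by induction on $j$ that $A^m$ rewrites to $A^{j-1}BA^{m-j}$ via a routine $G(j)$ that places a single $B$ at position $j$ while resetting positions $1,\dots,j-1$ to $A$. The base cases $j=1$ (apply $\rho_1$) and $j=2$ (apply $\rho_1$ then $\rho_2$) are immediate; for the step, run $G(j-1)$ to obtain a $B$ at position $j-1$ with positions $1,\dots,j-2$ back to $A$, then run $G(j-2)$ within positions $1,\dots,j-2$ (which leaves position $j-1$ untouched) to create a $B$ at position $j-2$, reaching the pattern $\cdots ABBA\cdots$ at positions $j-2,j-1,j$, and finally apply $\rho_3$ there. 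Taking $j=m$ reaches $A^{m-1}B$. By the potential, this rewriting automatically has length $\Phi(A^{m-1}B)=\fib(k)-1$, matching the lower bound, so the minimum is exactly $\fib(k)-1$.

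The main obstacle is discovering the weights $c_j=\fib(j+2)-1$: the Fibonacci recurrence is precisely what forces each of the three moves to cost a clean $+1$, turning a potential-function inequality into an exact identity and collapsing the lower bound, the upper bound, and the step count into one statement. A secondary point to get right is the reduction, namely verifying that the frozen $C$'s leave exactly the three induced moves $\rho_1,\rho_2,\rho_3$ and no others near the boundary.
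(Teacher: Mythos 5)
Your proof is correct and follows essentially the same route as the paper's: the reachability argument via the routine $G(j)=G(j-1);G(j-2);\rho_3$ is exactly the paper's induction $CCA^{k-2}\to_R^+ CCA^{k-4}BA\to_R^+ CCA^{k-5}BBA\to_R CCA^{k-3}B$, and your potential $\Phi$ with $c_j=\fib(j+2)-1$ is the paper's weight $W(u)=\sum_{i:\,u_i=B}(\fib(i)-1)$ after re-indexing to drop the two frozen $C$'s. The explicit reduction to the induced moves $\rho_1,\rho_2,\rho_3$ on $\{A,B\}^{k-2}$ is a nice clarifying touch but not a different method.
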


\begin{proof}
For the first claim we do induction on $k$. For $k=3$, we have $CCA \to_R CCB$.
For $k=4$, we have $CCAA \to_R CCBA \to_R CCAB$. For $k>4$, applying the induction
hypothesis twice, we obtain
\[CCA^{k-2} \to_R^+ CCA^{k-4}BA \to_R^+ CCA^{k-5}BBA \to_R CCA^{k-3}B.\]
For the second claim, we define the \emph{weight} $W(u)$ of a string $u = u_1 u_2 \cdots u_k$ over
$\{A,B,C\}$ of length $k$ by
$$
W(u) = \sum_{i : u_i = B} (\fib(i)-1).
$$
So every $B$ on position $i$ in
$u$ contributes $\fib(i)-1$ to the weight, and the other symbols have no weight.

Now we claim that
$W(v) = W(u)+1$
for all strings $u,v$ with $u \to_R v$ and $u,v$ only having $C$'s in the first two positions.
Since the $C$s only occur at positions 1 and 2, by applying $CCA \to
CCB$, the weight increases by $\fib(3)-1 = 1$ by the creation of $B$ on position 3,
and by applying $CBA \to CAB$, it increases by $\fib(4)-1 -(\fib(3)-1) = 1$  since $B$ on position 3
is replaced by $B$ on position 4.
By applying $BBA \to AAB$, the contributions to the weight $\fib(i)-1$ and $\fib(i+1)-1$ of the two
$B$s are replaced by $\fib(i+2)-1$ of the new $B$, which is an increase by 1 according to
the definition of $\fib$.

So this weight increases by exactly 1 at every rewrite step, hence it
requires exactly $\fib(k)-1$ steps, to go from the initial string  $CCA^{k-2}$ of weight 0 to the
weight $\fib(k)-1$ of a $B$ symbol on the last position $k$, if that is the only $B$, and more
steps if there are more $B$s.
\qed
\end{proof}

Now we are ready to define the PFA on $Q = \{A_i, B_i, C_i \mid i = 1,\ldots,k\}$ and three
symbols. The three symbols are a start symbol $s$, a rewrite symbol $r$ and a cyclic
shift symbol $c$. The transitions are defined as follows (writing $\bot$ for undefined):
\[
\renewcommand{\arraystretch}{1.4}
\begin{array}{|@{\quad}r@{\,=\,}l@{\qquad}r@{\,=\,}l@{\qquad}r@{\,=\,}l@{\qquad}l@{\quad}|}
\hline
\multicolumn{3}{|@{\quad}r@{\,=\,}}{A_is = B_is} &
\multicolumn{3}{l}{C_i s = C_i,} & \mbox{for $i = 1,2$}, \\[-5pt]
\multicolumn{3}{|@{\quad}r@{\,=\,}}{A_is = B_is} &
\multicolumn{3}{l}{C_i s = A_i,} & \mbox{for $i = 3,\ldots,k$}, \\
\hline
A_1r & \bot, & B_1r & A_1, & C_1r & C_1, & \\[-5pt]
A_2r & \bot, & B_2r & A_2, & C_2r & C_2, & \\[-5pt]
A_3r & B_3, & B_3r & \bot, & C_3r & B_2, & \\[-5pt]
A_ir & A_i, & B_ir & B_i, & C_ir & C_i, & \mbox{for $i = 4,\ldots,k$}, \\
\hline
A_ic & A_{i+1}, & B_ic & B_{i+1}, & C_ic & C_{i+1}, & \mbox{for $i = 1,\ldots,k-1$}, \\[-5pt]
A_kc & A_1, & B_kc & B_1, & C_kc & C_1. & \\
\hline
\end{array}
\vspace{3pt}
\]
A shortest carefully synchronizing word starts by $s$, since $r$ is not defined on all states and
$c$ permutes all states. After $s$, the set of reached states is
$S(CCA^{k-2}) = \{C_1,C_2,A_3,\ldots,A_k\}$. Here, for a string $u = a_1 a_2 \cdots a_k$
of length $k$ over $\{A,B,C\}$, we write $S(u)$ for the set of $k$ states, containing $A_i$
if and only if $a_i = A$, containing $B_i$ if and only if $a_i = B$, and containing $C_i$
if and only if $a_i = C$, for $i = 1,2,\ldots,k$. Note that for $x \in \{A,B,C\}$ and
$v \in \{A,B,C\}^{k-1}$, we have $S(vx)c = S(xv)$, so $c$ performs a cyclic shift on strings
of length $k$.

The next lemma states that the symbol $r$ indeed mimicks rewriting: applied on sets of the shape
$S(u)$, up to cyclic shift it acts as rewriting on $u$ with respect to $R$ defined above.

\begin{lemma}
\label{lemrewr}
Let $u$ be a string of the shape $CCw$, where $w \in \{A,B\}^{k-2}$.
If $u \to_R v$ for a string $v$, then $S(u)c^irc^{k-i} = S(v)$ for some $i<k$.

Conversely, if $u$ does not end in $B$ and there exists an $i$ such that
$r$ is defined on $S(u)c^i$, then $u \to_R v$ for a string $v$ of the shape
$CCw$, where $w \in \{A,B\}^{k-2}$.
\end{lemma}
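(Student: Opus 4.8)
The plan is to show that, up to the cyclic reframing supplied by $c$, the letter $r$ performs exactly one rewrite step of $R$ on the three-letter window in positions $1,2,3$. First I would record the local behaviour of $r$. Since $A_ir=A_i$, $B_ir=B_i$, $C_ir=C_i$ for all $i\geq 4$, the letter $r$ is the identity outside positions $1,2,3$, so all of its action is confined to a window of length three; and since $c$ is a total cyclic shift with $S(vx)c=S(xv)$, the set $S(u)c^i$ equals $S$ applied to $u$ shifted cyclically, and its first three positions are the cyclic window $u_ju_{j+1}u_{j+2}$ with $j\equiv 1-i\pmod k$. Reading off the table, $r$ is defined on such a window exactly when its first letter is $\neq A$, its second letter is $\neq A$, and its third letter is $\neq B$; and whenever the third letter is $A$,
\[
S(BBA\cdots)\,r=S(AAB\cdots),\qquad S(CBA\cdots)\,r=S(CAB\cdots),\qquad S(CCA\cdots)\,r=S(CCB\cdots),
\]
which are precisely the three rules of $R$.

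For the forward direction, suppose $u=CCw\to_R v$ by applying a rule at position $j$, so that $u_ju_{j+1}u_{j+2}$ is one of $BBA$, $CBA$, $CCA$. I would take $i\equiv 1-j\pmod k$ with $0\leq i<k$, so that $S(u)c^i$ carries this window into positions $1,2,3$; by the display above, $r$ rewrites it correctly. The only thing to verify is that the two $C$'s of $u$, at positions $1,2$, cause no interference: a $C$ lies inside the window only at window-position $1$ (the $CBA$-step) or window-positions $1,2$ (the $CCA$-step), where $r$ fixes it by $C_1r=C_1$ and $C_2r=C_2$, while every $C$ outside the window sits at a position $\geq 4$ and is fixed by $r$. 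In particular no $C$ is ever sent to position $3$, so the transition $C_3r=B_2$ is not triggered and no collision occurs. Applying $c^{k-i}=c^{-i}$ then shifts the rewritten string back, giving $S(u)c^irc^{k-i}=S(v)$; and since each rule leaves positions $1,2$ equal to $CC$ and keeps the other letters in $\{A,B\}$, the result $v$ is again of the form $CCw'$ with $w'\in\{A,B\}^{k-2}$.

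For the converse I would translate ``$r$ is defined on $S(u)c^i$'' into the window condition $u_j\neq A$, $u_{j+1}\neq A$, $u_{j+2}\neq B$, where $j\equiv 1-i\pmod k$, and then split on the third letter $u_{j+2}$. If $u_{j+2}=B$ the window fails definedness, so this case does not arise. If $u_{j+2}=C$ then, as the only $C$'s of $u=CCw$ sit at positions $1,2$, the window must be one of the two wrap-arounds $j=k$ (window $u_kCC$) or $j=k-1$ (window $u_{k-1}u_kC$); definedness of the first two letters then forces $u_k=B$, i.e.\ $u$ ends in $B$, which is excluded by hypothesis. Hence $u_{j+2}=A$, and the pair $(u_j,u_{j+1})\in\{B,C\}^2$; the combination $(B,C)$ cannot occur, since $u_{j+1}=C$ would put $j+1\in\{1,2\}$ and force either $u_{j+2}=C$ or $u_j=C$, both contradictions. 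The three surviving patterns $BBA$, $CBA$, $CCA$ are exactly the left-hand sides of $R$, so $u\to_R v$ with $v$ obtained by the corresponding rule; each leaves positions $1,2$ equal to $CC$ and the other letters in $\{A,B\}$, so $v=CCw'$ as required.

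I expect the converse to be the main obstacle, and within it the bookkeeping around the transition $C_3r=B_2$. This is the one place where $r$ is not a local rewrite: a window with third letter $C$ sends position $3$ onto position $2$, so $r$ may be defined there and yet collapse the set instead of rewriting it. The crux is to observe that in $u=CCw$ the third letter of the window is $C$ precisely for the two wrap-around windows $j\in\{k-1,k\}$, and that definedness of $r$ on either of them forces $u$ to end in $B$; the hypothesis then eliminates exactly these cases, leaving only the three genuine rewrite windows. Keeping the cyclic indexing of $c^i$ consistent throughout this case analysis is the only delicate point; everything else is a direct reading of the transition table.
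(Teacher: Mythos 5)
Your proof is correct and follows essentially the same route as the paper's: use $c^i$ to rotate the relevant three-letter window into positions $1,2,3$, read off that $r$ there acts exactly as one rule of $R$, and for the converse enumerate the eight windows on which $r$ could be defined, eliminating the non-rule ones via the location of the two $C$'s and the hypothesis that $u$ does not end in $B$. Your explicit handling of the $C_3r=B_2$ transition and of the wrap-around windows $j\in\{k-1,k\}$ is the same exclusion the paper phrases as ``$a_1a_2=BC$ or $a_2a_3=BC$ does not occur since $u$ does not end in $B$.''
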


\begin{proof}
First assume that $u \to_R v$.
If $u = u_1 BBA u_2$ and $v = u_1 AAB u_2$, then let $i = |u_2| + 3$, so
\begin{align*}
S(u) c^i r c^{k-i}
&= S(u_1 BBA u_2) c^i r c^{k-i} = S(BBA u_2 u_1) r c^{k-i} \\
&= S(AAB u_2 u_1) c^{k-i} = S(u_1 AAB u_2) = S(v).
\end{align*}
If $u = u_1 CBA u_2$ and $v = u_1 CAB u_2$, then again let $i = |u_2| + 3$, so
\begin{align*}
S(u) c^i r c^{k-i}
&= S(u_1 CBA u_2) c^i r c^{k-i} = S(CBA u_2 u_1) r c^{k-i} \\
&= S(CAB u_2 u_1) c^{k-i} = S(u_1 CAB u_2) = S(v).
\end{align*}
Finally, if $u = u_1 CCA u_2$ and $v = u_1 CCB u_2$, then $u_1 = \epsilon$ and the result follows
for $i=0$.

Conversely, suppose that $S(u)c^ir$ is defined. Since $S(u)c^k = S(u)$, we may assume
that $i < k$ and can write $u = u_1 u_2$, such that $|u_2| = i$.
Then $S(u)c^i = S(w)$, where $w = u_2 u_1$.
Write $w = a_1 a_2 \cdots a_k$. Since $S(u_2 u_1)r$ is defined, we
get $a_1 \neq A$,  $a_2 \neq A$ and $a_3 \neq B$.
Among these 8 cases, $a_1 = a_2 = a_3 = C$ does not occur since $u$ only contains 2 $C$s,
and $a_1 a_2 = BC$ or $a_2 a_3 = BC$ does not occur since $u$ does not end in $B$.
The remaining 3 cases are
$$
a_1a_2a_3 = BBA, \qquad a_1a_2a_3 = CBA, \qquad \mbox{and} \qquad a_1a_2a_3 = CCA,
$$
where $a_1a_2a_3$ is replaced by the corresponding right hand side of the rule by the
action of $r$. Then in $S(u)c^irc^{k-i}$, the two $C$s are on positions 1 and 2 again,
and we obtain $S(u)c^ir c^{k-i} = S(v)$ for a string $v$ of the given shape,
satisfying $u \to_R v$.
\qed
\end{proof}

Combining Lemmas \ref{lemrewrlen} and \ref{lemrewr} and the fact that $\fib(n) = \Omega(\phi^n)$,
we obtain the following.

\begin{corollary}
\label{correwr}
There is a word $w$ such that $S(CCA^{k-2})w = S(CCA^{k-3}B)$; the shortest word $w$ for which
$S(CCA^{k-2})w$ is of the shape $S(u) c^i$ for $u$ ending in $B$ has length $\Omega(\phi^k)$.
\end{corollary}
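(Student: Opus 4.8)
The plan is to treat the two assertions separately, each time using Lemma \ref{lemrewr} to translate between applications of $r$ and single $\to_R$-steps. For the existence of $w$, I would start from the chain $CCA^{k-2}=u_0\to_R u_1\to_R\cdots\to_R u_m=CCA^{k-3}B$ given by Lemma \ref{lemrewrlen}. Note first that every $u_t$ keeps its two $C$'s in positions $1,2$: the rules $BBA\to AAB$, $CBA\to CAB$, $CCA\to CCB$ neither create nor destroy $C$'s and never displace the two leading ones, so each $u_t$ has shape $CC\{A,B\}^{k-2}$ and Lemma \ref{lemrewr} applies at each step. Its forward direction gives for each $t$ an index $i_t<k$ with $S(u_t)c^{i_t}rc^{k-i_t}=S(u_{t+1})$; concatenating the blocks $c^{i_t}rc^{k-i_t}$ yields a word $w$ with $S(CCA^{k-2})w=S(CCA^{k-3}B)$.

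For the lower bound I would prove that any $w$ with $S(CCA^{k-2})w=S(u)c^i$, for a canonical $u$ (two $C$'s in front) ending in $B$, contains at least $\fib(k)-1$ letters $r$. The crux is that such a computation never leaves the family of size-$k$ string sets with two cyclically consecutive $C$'s. Indeed $r$ and $c$ are non-increasing in size, while $s$ sends every string set to $S(CCA^{k-2})$ and strictly shrinks any size-$k$ set that is not a string set; since both the start $S(CCA^{k-2})$ and the target $S(u)c^i$ have size $k$, the size stays equal to $k$ throughout, which forces every intermediate set to be a string set of the stated form. On these sets $c$ is a cyclic shift (fixing the canonical string), $s$ resets the canonical string to $CCA^{k-2}$, and — by the case analysis in the proof of Lemma \ref{lemrewr} — a size-preserving $r$ must be one of the redexes $BBA,CBA,CCA$, i.e.\ a single $\to_R$-step, the only other defined patterns $BBC,BCC$ identifying two states and hence dropping the size.

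Reading off the canonical strings along the run, $w$ thus induces a sequence of $\to_R$-steps interspersed with resets to $CCA^{k-2}$ and finishing at $u$. Restricting to the part after the last reset gives a genuine rewrite chain $CCA^{k-2}\to_R^{*}u$ of length equal to the number of $r$'s occurring there; as $u$ ends in $B$, Lemma \ref{lemrewrlen} bounds this below by $\fib(k)-1$. Hence $w$ has at least $\fib(k)-1$ letters, so $|w|\ge\fib(k)-1=\Omega(\phi^k)$. I expect the main obstacle to be exactly this confinement to string sets: the clean $r\leftrightarrow\to_R$ correspondence of Lemma \ref{lemrewr} only holds there, and one must exclude cheaper runs that temporarily leave the regime — which is what the size-monotonicity argument (no symbol increases the size, and only $s$, resetting rather than helping, could repair a collapse) accomplishes.
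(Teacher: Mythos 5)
Your overall architecture matches the paper's intended argument: the forward direction of Lemma~\ref{lemrewr} applied along the rewrite chain of Lemma~\ref{lemrewrlen} gives the existence of $w$ (and your check that every intermediate string keeps its two $C$'s at positions $1,2$ is correct), while for the lower bound one confines the computation to sets of the form $S(u)c^i$, reads off rewrite steps and resets, and applies the weight bound of Lemma~\ref{lemrewrlen} to the segment after the last $s$. The existence half and your final paragraph are sound.

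The gap is in your justification of the confinement, which you yourself identify as the crux. You claim that an application of $r$ at a defined non-redex pattern such as $BBC$ or $BCC$ ``identifies two states and hence drops the size'', so that size-preservation forces every intermediate set to remain a string set. This is false: $r$ is injective by construction (the paper remarks that $C_3r=B_2$ was chosen precisely to make $r$ injective), so no application of $r$ ever decreases the cardinality. What actually happens at such a pattern is that $C_3r=B_2$ moves an element from index class $3$ to index class $2$, producing a set that still has size $k$ but is no longer a string set; your size-monotonicity test cannot detect this departure, and once outside the string-set family you would additionally have to exclude that further $c$'s and $r$'s return to a set of the target shape --- which is not obvious from size alone, precisely because $r$ shuffles elements between index classes $3$ and $2$. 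The correct reason these patterns never arise before the target is first reached is not size but the hypothesis of the converse direction of Lemma~\ref{lemrewr}: as long as the canonical $u$ does not end in $B$, the cyclic adjacency of its two $C$'s together with that hypothesis rules out every defined pattern except the three redexes $BBA$, $CBA$, $CCA$, and then $S(u)c^ir=S(v)c^i$ with $u\to_R v$ canonical. Replacing your size argument by an induction on the prefix of $w$ that invokes this converse statement at every $r$-step closes the gap and makes your proof coincide with the paper's reasoning in Lemma~\ref{lemlb}.
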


Now we are ready to prove the lower bound:

\begin{lemma}
\label{lemlb}
If $w$ is carefully synchronizing, then $|w| = \Omega(\phi^k)$.
\end{lemma}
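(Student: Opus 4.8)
The plan is to prove that the shortest carefully synchronizing word already has length $\Omega(\phi^k)$; since any carefully synchronizing word is at least as long as the shortest one, the lemma follows. First I would record two elementary facts about any careful run $Q, Qa_0, Qa_0a_1,\dots$: applying a partial map can only shrink a set, so the sizes are non-increasing; and $r$ is undefined on $A_1\in Q$ while $c$ is a bijection on $Q$. Hence $Qr=\emptyset$ is forbidden and $Qc=Q$, so the first symbol that actually changes the set is $s$, after which we sit in $S_1:=S(CCA^{k-2})$ of size $k$. For a shortest word there are no superfluous leading $c$'s, so I may assume $w=s\,w'$ and reduce the problem to bounding how many steps $w'$ needs before the size first drops below $k$.

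The crucial structural point is that \emph{only $s$ can decrease the set size}. Indeed $c$ is a bijection, and the table for $r$ shows that on its domain (all states except $A_1,A_2,B_3$) it is injective: the images $B_1\mapsto A_1$, $C_1\mapsto C_1$, $B_2\mapsto A_2$, $C_2\mapsto C_2$, $A_3\mapsto B_3$, $C_3\mapsto B_2$, together with the identity on indices $\ge 4$, are pairwise distinct. So neither $c$ nor $r$ merges two states, and using an undefined $r$-transition would empty the set, which careful synchronization forbids. Thus every strict decrease in size is caused by $s$. Since $|Xs|$ equals the number of positions $i$ with $X\cap\{A_i,B_i,C_i\}\neq\emptyset$, an application of $s$ to a size-$k$ set shrinks it exactly when that set leaves some position unoccupied.

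Next I would track the first time an unoccupied position appears, say at index $t'$. I claim every $S_j$ with $j<t'$ is a full set of size $k$, hence of the form $S(u_j)$, and contains exactly two $C$'s: fullness and size $k$ are preserved by $c$, by $s$ (which resets to $S(CCA^{k-2})$), and by $r$ except when $r$ empties a position; and the number of $C$'s is preserved by $c$, reset to $2$ by $s$, and preserved by $r$ unless a $C_3$ is converted. The only way $r$ alters which positions are occupied is via $C_3\mapsto B_2$, so the transition $S_{t'-1}\to S_{t'}$ must be an $r$ applied to some $S(u)$ with $u_3=C$ and, for $r$ to be defined, $u_1,u_2\neq A$. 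The two $C$'s form a cyclically adjacent pair that began at positions $1,2$, so I would un-shift, writing $S(u)=S(u^\ast)c^j$ with $j\in\{1,2\}$ and $u^\ast$ having its $C$'s back at positions $1,2$; the conditions $u_1,u_2\neq A$ then force the $j$ symbols of $u^\ast$ that wrapped around to be $B$, so $u^\ast$ ends in $B$.

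Finally I would invoke Corollary~\ref{correwr}. The prefix of $w$ running from $S_1$ to $S_{t'-1}=S(u^\ast)c^j$ is a word carrying $S(CCA^{k-2})$ to a set of the shape $S(u)c^i$ with $u$ ending in $B$; any intermediate $s$ merely resets the configuration to $S(CCA^{k-2})$ and so can only lengthen such a word. Hence this prefix has length $\Omega(\phi^k)$ by Corollary~\ref{correwr}, and therefore $|w|=\Omega(\phi^k)$. I expect the main obstacle to be the bookkeeping of the third paragraph: verifying cleanly that fullness, size $k$, and the two-$C$ invariant all persist up to time $t'$, and that the un-shifting really produces a string \emph{ending} in $B$ rather than just a cyclic rotation with a $B$ somewhere.
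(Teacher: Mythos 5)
Your proof is correct and follows essentially the same route as the paper's: the first symbol must be $s$, the set cannot shrink or leave the family of full configurations $S(u)c^i$ until the $C_3r=B_2$ transition fires (which forces a configuration $S(u^*)c^j$ with $u^*$ ending in $B$), and Corollary~\ref{correwr} then yields the $\Omega(\phi^k)$ bound. The invariant-tracking in your third paragraph merely re-derives by hand what the paper obtains by citing Lemma~\ref{lemrewr}.
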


\begin{proof}
Assume that $w$ is a shortest carefully synchronizing word. Then we already observed that the first
symbol of $w$ is $s$, and $w$ yields $S(CCA^{k-2})$ after the first step in the power automaton.
By applying only $c$-steps and $r$-steps, according to Lemma \ref{lemrewr},
only sets of the shape $S(u)c^i$ for which $CCA^{k-2} \to_R^+ u$  can be reached, until u ends in $B$.
In this process, each $r$-step corresponds to a rewrite step. Applying the
third symbol $s$ does not make sense, since then we go back to $S(CCA^{k-2})$. According
to Corollary \ref{correwr},
in the power automaton at least $\Omega(\phi^k)$ steps are required to reach a set
which is not of the shape $S(u) c^i$. So for reaching a singleton, the total number of steps is
at least $\Omega(\phi^k)$.
\qed
\end{proof}

Note that for the reasoning until now, the definition of $C_3 r = B_2$ did not play a role, and by
$s,r$ all states were replaced by states having the same index. But after the last symbol of $u$ has
become $B$, this $C_3 r = B_2$ will be applied, leading to a subset in which no state of the group
$A_3, B_3, C_3$ occurs any more.
\versie{We could have chosen $C_3 r = A_2$ or $C_3 r = C_2$ as well: it is just that $C_3 r = B_2$
makes $r$ injective, just like $q$.}{}
Now we arrive at the main theorem.

\begin{theorem}
For every $n$ there is a carefully synchronizing PFA on $n$ states and three symbols with shortest carefully synchronizing word length $\Omega(\phi^{n/3})$.
\end{theorem}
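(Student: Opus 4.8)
The plan is to combine the results already proved, which settle the lower bound, with two further ingredients: a proof that the constructed PFA is genuinely carefully synchronizing, and a reduction of general $n$ to the case $n=3k$. So I would fix $k\geq 3$ and take the $3k$-state PFA defined above. Once we know it is carefully synchronizing, Lemma~\ref{lemlb} gives directly that its shortest carefully synchronizing word has length $\Omega(\phi^{k})=\Omega(\phi^{n/3})$, which is exactly the desired bound in the case $n=3k$. Thus the theorem for multiples of $3$ reduces entirely to establishing careful synchronizability.

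To prove careful synchronizability I would exhibit an explicit carefully synchronizing word. It starts with $s$, reaching $S(CCA^{k-2})$, and continues with the word of Corollary~\ref{correwr} that reaches $S(CCA^{k-3}B)$; one shift $c$ turns this into $S(BCCA^{k-3})$, and then $r$ (using $B_1r=A_1$, $C_2r=C_2$ and $C_3r=B_2$) produces $\{A_1,B_2,C_2,A_4,\ldots,A_k\}$, which is no longer of the form $S(u)$: index $3$ has become empty and index $2$ carries two states, which a subsequent $s$ merges, giving $\{C_1,C_2,A_4,\ldots,A_k\}$ of size $k-1$. The remaining \emph{cleanup phase} must drive this set to a singleton, and the mechanism available is forced: since $r$ is injective and $c$ is a bijection, the size can only be decreased by applying $s$ after two states have been brought into a common index group, and the only transition that moves a state between index groups is $C_3r=B_2$. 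Accordingly I would show, by induction on the size of the reachable set, that each non-singleton set admits a short reducing word: cyclic shifts realign the two $C$'s to positions $2$ and $3$ while keeping a non-$A$ symbol in front so that $r$ stays defined, then $r$ sends $C_3$ to $B_2$ and forces a collision in index group $2$, and finally $s$ merges it, after which $s$ regenerates $C_1,C_2$ at the front. The base case $\{C_1,C_2\}$ is finished by $c\,r\,s$, via $\{C_1,C_2\}\xrightarrow{c}\{C_2,C_3\}\xrightarrow{r}\{B_2,C_2\}\xrightarrow{s}\{C_2\}$. The hard part will be making this reducing step uniform in $k$: I must verify that the required realignment is always achievable and that no $A_1$, $A_2$ or $B_3$ is ever exposed at the moment $r$ is applied; the gap left by the emptied index moves under $c$ and can separate the two $C$'s, so the bookkeeping of the shift amounts is delicate. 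The cases $k=3,4$ already exhibit the pattern and convince me it generalises.

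Finally I would remove the restriction $3\mid n$. For $n=3k+j$ with $j\in\{1,2\}$, adjoin $j$ extra states and extend all three symbols so that $c$ (and, say, $s$ and $r$) map each extra state to a fixed original state, leaving the transitions among the original $3k$ states untouched. Then $Q_{\mathrm{ext}}c=Q$, so $c$ followed by the carefully synchronizing word of the $3k$-state PFA is carefully synchronizing for the extension. Conversely, since no original state maps to an extra state and the original transitions are unchanged, any carefully synchronizing word $w$ for the extension is defined on $Q$ and sends $Q\subseteq Q_{\mathrm{ext}}$ to a singleton, hence is carefully synchronizing for the $3k$-state sub-PFA; therefore $|w|\geq\Omega(\phi^{k})$ by the case already proved. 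As $\phi^{k}=\phi^{(n-j)/3}=\Theta(\phi^{n/3})$ for $j\leq 2$, the bound $\Omega(\phi^{n/3})$ holds for every $n$.
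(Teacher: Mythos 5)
Your overall architecture (lower bound from Lemma~\ref{lemlb}, plus a separate proof of careful synchronizability, plus padding for $n\not\equiv 0 \bmod 3$) matches the paper, and the padding argument at the end is fine. The gap is in the synchronizability part: your cleanup phase does not work as described. After your first reduction the set is $\{C_1,C_2,A_4,\ldots,A_k\}$, and your proposed move --- ``cyclic shifts realign the two $C$'s to positions $2$ and $3$'' --- fails immediately for every $k\ge 4$: the shift $c$ that sends $C_1,C_2$ to $C_2,C_3$ simultaneously sends $A_k$ to $A_1$, on which $r$ is undefined. Repairing this is not mere bookkeeping. Applying $s$ to get rid of $A_1$ turns $C_3$ back into $A_3$, destroying the alignment, and once the set has shrunk further you can reach configurations such as $\{C_2,A_4\}$ (for $k=4$) whose occupied columns are \emph{not} cyclically adjacent; since $c$ preserves cyclic distances between columns and the only column-changing transition is $C_3r=B_2$, no amount of shifting realigns them, and one must first use $r$ to convert letters (e.g.\ $A_3\to B_3$) and take a detour through sets that are not of the form your induction hypothesis describes. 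So the claimed induction on $|S|$ with that reduction step is not established, and ``the cases $k=3,4$ convince me it generalises'' is exactly where the argument breaks ($k=4$ already does not follow your recipe).

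The paper avoids this entirely with a different and much cleaner idea: instead of shrinking the reachable set $Qs\cdots$ step by step, it tracks each triple $\{A_i,B_i,C_i\}$ separately and checks that the single block $swcr$ (with $w$ the word from Corollary~\ref{correwr}) maps $\{A_i,B_i,C_i\}$ into a singleton contained in the cyclically next triple $\{A_i,B_i,C_i\}c$ for $i\ne 2$, and into $\{A_2,B_2,C_2\}$ itself for $i=2$. Hence $(swcr)^{k-1}$ drives every state into column $2$ and a final $s$ collapses it to $\{C_2\}$, giving the explicit carefully synchronizing word $(swcr)^{k-1}s$ with no case analysis on intermediate sets. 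If you want to salvage your approach you would need to formulate a genuinely correct invariant for the cleanup sets and exhibit a uniform reducing word for each; the triple-tracking argument is the short way around.
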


\begin{proof}
If $n = 3k$ we take our automaton, otherwise we add one or two states on which $r,c$ are undefined and $s$ maps to $A_1$,
having no influence on the argument. The bound was proved in Lemma \ref{lemlb}; it remains to
prove that the automaton is carefully synchronizing, that is, it is possible to end up in a
singleton in the power automaton.

Let $w$ be the word from Corollary \ref{correwr}.
Since $S(CCA^{k-2})w = S(CCA^{k-3}B)$
and the number of $c$'s in $w$ is divisible by $k$, we have $C_1 w = C_1$, $C_2 w = C_2$, $A_3 w
= A_3, \ldots, A_{k-1} w = A_{k-1}$, $A_k w = B_k$. Hence
\begin{alignat*}{4}
\{A_1,B_1,C_1\}swcr &=\,& \{C_1\}cr &=\,& \{C_2\} &\subseteq\,& \{A_1,B_1,C_1\}&c,  \\
\{A_2,B_2,C_2\}swcr &=& \{C_2\}cr &=& \{B_2\} &\subseteq &\{A_2,B_2,C_2\}&, \\
\{A_i,B_i,C_i\}swcr &=& \{A_i\}cr &=& \{A_{i+1}\} &\subseteq &\{A_i,B_i,C_i\}&c,
\quad \mbox{for $i=3,4,\ldots,k-1$,} \\
\{A_k,B_k,C_k\}swcr &=& \{B_k\}cr &=& \{A_1\} &\subseteq &\{A_k,B_k,C_k\}&c.
\end{alignat*}
So for all $i \ne 2$, $\{A_i,B_i,C_i\}swcr$ is contained in the cyclic successor $\{A_i,B_i,C_i\}c$
of $\{A_i,B_i,C_i\}$. $\{A_2,B_2,C_2\}swcr$ is just contained in $\{A_2,B_2,C_2\}$ itself.
Since for any $i$, one can take the cyclic successor of $\{A_i,B_i,C_i\}$ at most $k-1$ times before
ending up in $\{A_2,B_2,C_2\}$, we deduce that
$$
\{A_i,B_i,C_i\}(swcr)^{k-1} \subseteq \{A_2,B_2,C_2\}
\quad \mbox{for $i=1,2,\ldots,k$}.
$$
As $\{A_2,B_2,C_2\}s = \{C_2\}$, we obtain the carefully synchronizing word $(s w c r)^{k-1}s$
of the PFA.
\qed
\end{proof}

\versie{The word $(s w c r)^{k-1}s$ is a lot longer than necessary. In fact, one can
prove that only $\Omicron(k^2)$ $c$-steps and $\Omicron(k)$ $r$-steps and $s$-steps suffice
after $swcr$.}{}

\section{Reduction to Two Symbols}
\label{sectwos}
In this section we construct PFAs with two symbols and exponential shortest carefully synchronizing word length. We do this
by a general transformation to two-symbol PFAs, as was done before, e.g. in \cite{V16}. There a PFA on
$n$ states and $m$ symbols was transformed to a PFA on $mn$ states and two symbols, preserving
synchronization length. In the next theorem,
we improve this resulting number of states to $(m-1)n$ or even less, only needing a mild extra condition.
Using this result, we reduce our 3-symbol PFA with synchronizing length $\Omega(\phi^{n/3})$ to a
2-symbol PFA with synchronizing length $\Omega(\phi^{n/5})$.

\begin{theorem}
\label{lem2sym}
Let $P = (Q, \Sigma)$ be a carefully synchronizing PFA with $|Q| = n$, $|\Sigma| = m$, and
shortest carefully synchronizing word length $f(n)$. Assume $s \in \Sigma$ and $Q' \subseteq Q$
satisfy the following properties.
\begin{enumerate}
\item there is some number $p$ such that all symbols are defined on $Q s^p$ for a complete symbol $s$,
\item $qs = q$ for all $q \in Q'$, and
\item $qa = qb$ for all $q \in Q'$ and all $a,b \in \Sigma \setminus \{s\}$.
\end{enumerate}
Let $n' = n - |Q'|$. Then there exists a carefully
synchronizing PFA on $n + n'\bcdot(m-2)$ states and 2 symbols, with shortest carefully synchronizing word
length at least $f(n)$.
\end{theorem}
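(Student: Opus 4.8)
The plan is to encode each of the $m-1$ non-$s$ symbols of $P$ into words over a two-letter alphabet $\{s, t\}$, using a "gadget" of extra copies of states so that reading a block of the two-letter word simulates one application of the corresponding original symbol. The standard construction (as in \cite{V16}) replaces each symbol by a path through $n$ fresh states, costing $mn$ states total. My improvement exploits the three hypotheses to avoid duplicating the states in $Q'$ and to reuse the symbol $s$ itself as one of the two letters, so that only $m-2$ extra copies of the $n' = n - |Q'|$ states outside $Q'$ are needed.

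Let me set up the construction explicitly. Enumerate the non-$s$ symbols as $a_1, \ldots, a_{m-1}$. For each $j = 2, \ldots, m-1$ I introduce a fresh layer $Q_j$ consisting of copies $q^{(j)}$ of the states $q \in Q \setminus Q'$; this gives $n'(m-2)$ new states, for a total of $n + n'(m-2)$ as claimed. The two symbols of the new PFA will be $s$ (inherited) and a new symbol $t$. The idea is that $t$ moves a state "one layer inward," and an appropriate short word $s^? t^? \cdots$ built from $t$ and the reused $s$ simulates the action of a chosen $a_j$. Concretely, I would make $t$ route states through the layers $Q_{m-1}, Q_{m-2}, \ldots$ so that after $j$ applications of $t$ (interleaved with $s$ to realign) a state on $Q'$ stays fixed (using hypothesis 2, $qs=q$), while a state off $Q'$ is eventually delivered to $qa_j$. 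Hypothesis 3, that $qa = qb$ for all $q \in Q'$ and all non-$s$ symbols, is what lets the states of $Q'$ be shared across all the gadgets without a separate copy per symbol: on $Q'$ every $a_j$ acts identically, so a single fixed transition handles all of them simultaneously, saving $|Q'|$ states per layer.

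With the encoding fixed, I would define a length function $\sigma$ mapping each symbol of $P$ to its simulating two-letter block, verify that for every $q \in Q$ and every $a_j$ the block $\sigma(a_j)$ applied in the new PFA sends (the layer-$0$ copy of) $q$ to (the layer-$0$ copy of) $qa_j$, and that $s$ acts as before. Hypothesis 1 is used to guarantee the whole construction is \emph{carefully} synchronizing rather than merely synchronizing: the word $s^p$ drives everything into $Qs^p$, on which all symbols are defined, so the simulating blocks never run into undefined transitions on the relevant sets. Careful synchronization of $P$ then lifts to a carefully synchronizing word for the new PFA obtained by substituting each original symbol by its block (prefixed by $s^p$ if needed). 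For the lower bound $f(n)$ on the new shortest word, I would argue that any carefully synchronizing word $w'$ over $\{s,t\}$ for the big PFA must, when restricted to its action on the layer-$0$ copies, factor through the block structure and hence project to a carefully synchronizing word of $P$; since $w'$ is at least as long as the word it projects to, $|w'| \ge f(n)$.

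The main obstacle I expect is making the simulation \emph{exact} and \emph{unambiguous} in both directions: it is easy to define $t$ so that the intended blocks simulate the intended symbols, but I must rule out "shortcut" two-letter words that synchronize the enlarged PFA faster than any honest block-word, which would break the lower bound. This requires the extra layers to be arranged so that partial blocks (words of $\{s,t\}^*$ that do not cleanly decompose into full simulating blocks) either leave some layer-copy "stranded" on an intermediate layer or hit an undefined transition, so they cannot be carefully synchronizing. Verifying this rigidity — that every carefully synchronizing word of the new PFA genuinely projects back to one of $P$ of no greater length — is where the careful bookkeeping lies; the state-count optimization itself then follows immediately from having shared $Q'$ across layers and recycled $s$ as a letter.
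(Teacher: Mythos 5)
Your high-level plan is the right one and matches the paper's in spirit (one full copy of $Q$ plus $m-2$ extra layers containing only copies of $Q\setminus Q'$, with each original symbol encoded as a short block over two letters), but as written there is a genuine gap in two places. First, the construction is never actually defined: you say you ``would make $t$ route states through the layers'' and deliver a state to $qa_j$ ``after $j$ applications of $t$ (interleaved with $s$ to realign)'', but no transition function for $t$ (or for the letters on the new layer states) is given, so nothing can be checked. Worse, your specific choice to reuse $s$ as one of the two letters clashes with the $Q'$ optimization you are trying to exploit. A block simulating $a_j$ must send every $q\in Q'$ to the common value $qa_1=\cdots=qa_{m-1}$ by applying that action \emph{exactly once}, independently of $j$, while performing $j$ layer shifts on the other states. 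The paper achieves this by introducing two fresh letters: $a$ descends one layer (acting as $s$ on the top-row part corresponding to $Q'$, which is the identity by hypothesis 2) and $b$ satisfies $P_{i,j}b=P_{1,ja_i}$, so every block $\psi(a_i)=a^{i-1}b$ ends with the single dedicated letter that fires the non-$s$ action once and returns to the top row, and $\psi(s)=a^{m-1}$. Under the only reading consistent with your sketch, a block for $a_j$ has the shape $t^{j}s$; a state of $Q'$ then receives $j$ applications of $t$ followed by $s$ (which fixes it), and there is no choice of $t$ on $Q'$ making this equal $qa_j$ for all $j$ simultaneously without assuming an idempotency that the hypotheses do not give.

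Second, the lower bound --- the actual content of the theorem --- is explicitly deferred (``where the careful bookkeeping lies''). This is not mere bookkeeping: it is the step that dictates the design of the letters. In the paper every block ends in $b$, and $b$ always lands in the top row, so an arbitrary word over $\{a,b\}$ parses \emph{uniquely} as $\psi(w)a^{j}$ with $0\le j\le m-2$; since $a^{j}$ with $j\le m-2$ is injective on the top row, $\psi(w)$ must already synchronize the top row, hence $w$ synchronizes $P$ and $|\psi(w)a^j|\ge|w|\ge f(n)$. With $s$ as a letter of the new alphabet this parsing becomes ambiguous (a given $s$ could be a complete block $\psi(s)$ or an internal ``realignment''), and ruling out shortcut words that act on the layer-$0$ copies without factoring through any $w\in\Sigma^*$ is exactly what you have not done. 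Until you fix a concrete transition table and prove the unique-decomposition and injectivity claims, the bound ``at least $f(n)$'' is unsupported.
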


Note that if $Q' = \emptyset$ then only requirement 1 remains, and the resulting number of states is
$n + n'\bcdot(m-2) = (m-1)n$.

\begin{proof}
Write $Q = \{1,2,\ldots,n\}$, $Q' = \{n'+1,\ldots,n\}$,
and $\Sigma = \{s,a_1,\ldots,a_{m-1}\}$.
Let the states of the new PFA be
$P_{1,j}$ for $j = 1,\ldots,n$ and
$P_{i,j}$ for $i = 2,\ldots,m-1$, $j = 1,\ldots,n'$. Define the following two symbols $a,b$ on
these states:
\[P_{i,j} a = \begin{cases}
P_{i+1,j}, & \text{if $i < m-1, j \leq n'$}, \\
P_{1,j s}, & \text{if $i = m-1, j \leq n'$}, \\
P_{1,j}, & \text{if $i = 1, j > n'$}.
\end{cases}
\qquad
\begin{array}{cccccc}
P_{1,1} & \cdots & P_{1,n'} & P_{1,n'+1} & \cdots & P_{1,n} \\
P_{2,1} & \cdots & P_{2,n'} && \\
\vdots & & \vdots &&\\
P_{m-1,1} & \cdots & P_{m-1,n'} &&
\end{array}
\]
and $P_{i,j} b = P_{1,j a_i}$, for all $i = 1,\ldots,m-1$ and $j = 1,\ldots,n$
for which $P_{i,j}$ exists and $j a_i$ is defined.

If we arrange the states as indicated above, then on the leftmost $n'$ columns,
$a$ moves the states one step downward if possible, and
for the bottom row jumps to the top row and
acts there as $s$. For the remainder of the top row $a$ also acts as $s$
(which is the identity). On the leftmost $n'$ columns, the symbol $b$ acts
as $a_i$ on row $i$ and then jumps to the top line. For the remainder of the
top row, all $a_i$ act in the same way and $b$ acts likewise.

Define
$\psi(a_i) = a^{i-1} b$ for $i = 1,\ldots,m-1$, and $\psi(s) = a^{m-1}$. Then on the top line
$\psi(a_i)$ acts in the same way as $a_i$ in the original PFA. Similarly, $\psi(s)$ acts as $s$.
On any other row, $\psi(s)$ acts as $s$, too. Since every symbol $a_i$ is
defined on $qs^p$ for every $q\in Q$, we obtain that $\psi(s)^pb=a^{(m-1)p}b$
is defined on every state and ends up in the top row.

Assume that $w$ is carefully synchronizing in the original PFA.
Then by the above observations, $a^{(m-1)p} b \psi(w)$ is carefully synchronizing in the new PFA.
Conversely, any carefully synchronizing word of the new PFA can be written as $\psi(w)a^j$,
where $0\leq j\leq m-2$ and $\psi(w)$ is a
concatenation of blocks of the form $\psi(l),l\in\Sigma$. Now note that
$a^j$ can never synchronize two distinct states in the top row. Therefore,
$\psi(w)$ synchronizes the top row and consequently $w$ is synchronizing in
the original PFA. Clearly $|\psi(w)a^j|\geq|w|\geq f(n)$.
\qed
\end{proof}

We apply Theorem \ref{lem2sym} to our basic construction with $3k$ states and $m=3$ symbols;
note that $s,c$ are defined on all states and $r$ is defined on $Qs$, so the requirements of
Theorem \ref{lem2sym} hold for $p=1$. As $r$ and $c$ act differently on all states, the only option for
$Q'$ is $Q' = \emptyset$.
Hence we obtain a carefully synchronizing PFA on $(m-1)3k = 6k$ states and two symbols, with
shortest carefully synchronizing word length $\Omega(\phi^k)$.
For $n$ being the number of states of the new PFA, this is $\Omega(\phi^{n/6})$.

However, instead of our three symbols $s,c,r$ we also get careful synchronization on the three
symbols $s,c,rc$ with careful synchronization length of the same order. But then for $i = 4,\ldots,k$ we
have $A_i s = A_i$ and $A_i c = A_i rc$, so we may choose $Q' = \{A_4,\ldots,A_k\}$ in Theorem
\ref{lem2sym}, by which $n' = 3k - (k-3) = 2k+3$, yielding a PFA on two symbols and $5k+3$ states.
This results in the following theorem, where for $n$ not of the shape $5k+3$ we add $\leq 4$ extra states
to achieve this shape, where $b$ is undefined on the new states and $a$ maps the new states to existing states.

\begin{theorem}
For every $n$ there is a carefully synchronizing PFA on $n$ states and two symbols with shortest carefully synchronizing word length $\Omega(\phi^{n/5})$.
\end{theorem}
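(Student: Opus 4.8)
The plan is to derive this theorem from the two-symbol transformation of Theorem~\ref{lem2sym}, applied not to the bare three-symbol PFA of the previous section but to a variant of it chosen so that the set $Q'$ can be made large. Applying Theorem~\ref{lem2sym} directly to $(Q,\{s,c,r\})$ forces $Q'=\emptyset$ (since $r$ and $c$ act differently on every state), giving $(m-1)n=6k$ states and hence only $\Omega(\phi^{n/6})$. The improvement to $\Omega(\phi^{n/5})$ comes from replacing the rewrite symbol $r$ by the composite symbol $rc$: on the states $A_4,\dots,A_k$ the symbols $c$ and $rc$ then coincide, which is exactly condition~3 of Theorem~\ref{lem2sym} and lets us take $Q'=\{A_4,\dots,A_k\}$.

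First I would show that $(Q,\{s,c,rc\})$ is still carefully synchronizing with shortest word length $\Omega(\phi^k)$, since Theorem~\ref{lem2sym} is to be applied to this modified PFA. For existence, note that $c$ cyclically shifts the indices modulo $k$, so $c^{k}$ is the identity and $(rc)c^{k-1}=rc^{k}=r$; replacing each $r$ in some carefully synchronizing word of $(Q,\{s,c,r\})$ by $(rc)c^{k-1}$ produces a carefully synchronizing word over $\{s,c,rc\}$. For the lower bound, given any carefully synchronizing word $w$ over $\{s,c,rc\}$, expand every occurrence of $rc$ into the two letters $r,c$; the resulting word $w'$ over $\{s,c,r\}$ acts identically on $Q$, hence is carefully synchronizing, and $|w'|\le 2|w|$, so Lemma~\ref{lemlb} gives $|w|\ge\frac12|w'|=\Omega(\phi^k)$.

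Next I would check the three hypotheses of Theorem~\ref{lem2sym} for $P=(Q,\{s,c,rc\})$ with $m=3$ and $Q'=\{A_4,\dots,A_k\}$. Condition~1 holds with $p=1$: $s$ and $c$ are total and $r$ is defined on $Qs$, so $rc$ is defined on $Qs$. Condition~2 holds because $A_is=A_i$ for $i\ge 3$. Condition~3 concerns the two non-$s$ symbols $c$ and $rc$: for $i\ge 4$ we have $A_ir=A_i$, whence $A_i(rc)=A_ic$; the state $A_3$ must be excluded from $Q'$ precisely because $A_3r=B_3\ne A_3$. With $|Q'|=k-3$ we get $n'=n-|Q'|=3k-(k-3)=2k+3$, so the transformed PFA has $n+n'(m-2)=3k+(2k+3)=5k+3$ states and, by Theorem~\ref{lem2sym}, shortest carefully synchronizing word length at least $f(3k)=\Omega(\phi^k)$.

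Finally, with the number of states equal to $N=5k+3$ one has $k=(N-3)/5$, so the length bound reads $\Omega(\phi^{k})=\Omega(\phi^{N/5})$, giving the claim for all $N$ of the form $5k+3$. For a general $n$ I would pad the construction with at most four additional states on which $b$ is undefined and $a$ maps into the existing states, bringing the state count to the required form $5k+3$ without affecting careful synchronization or the asymptotic bound. I expect the only genuinely delicate point to be the second step --- justifying that passing from $r$ to the composite symbol $rc$ preserves both the existence and the exponential length of careful synchronization --- since everything downstream (the verification of the three conditions and the arithmetic $3k\mapsto 5k+3$) is routine once that equivalence is in place.
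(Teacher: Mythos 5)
Your proposal is correct and follows essentially the same route as the paper: replace $r$ by the composite symbol $rc$ so that $Q'=\{A_4,\dots,A_k\}$ satisfies the hypotheses of Theorem~\ref{lem2sym}, yielding $5k+3$ states, and pad with at most four extra states for general $n$. Your explicit justification that passing to $\{s,c,rc\}$ preserves both careful synchronization (via $(rc)c^{k-1}=r$) and the $\Omega(\phi^k)$ lower bound (via expanding $rc$ into $r,c$ with at most a factor-two length increase) is a correct filling-in of a step the paper only asserts.
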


\section{Further Optimizations}\label{secopt}
Some further optimizations are possible.  For instance, for any $h \geq 2$ we can take $h+1$
rewrite rules
$$
C^i B^{h-i} A \to C^i A^{h-i} B
$$
for $i = 0,\ldots,h$, and construct a
PFA on the $n = 3k$ states $A_i,B_i,C_i$ for $i = 1,\ldots,k$ with a similar
$s$, $c$, and $r$, mimicking the
rewrite rules in which the rewriting takes place in the states with indexes $\leq h+1$.
For $h=2$, this coincides with our construction, but for $h > 2$, this gives a better bound
$\Omega(a^k)$, where $a$ is the real zero of $x^h - x^{h-1} - \cdots - x - 1$ in
between $3/2$ and 2. As this value tends to 2 for increasing $h$, for every $\epsilon > 0$, we
achieve the bound $\Omega((2 - \epsilon)^{n/3})$ for three symbols and
$\Omega((2 - \epsilon)^{n/5})$ for two symbols.
\versie{

We can also add additional letters in between $A$ and $B$ to the rewrite system, say
$m - 2$ letters $X = X^{(1)}, X^{(2)}, \ldots, X^{(m-2)}$, and take $(h+1)(m-1)$
rewrite rules
\begin{align*}
C^i B^{h-i} A &\to C^i A^{h-i} X^{(1)} \\
C^i B^{h-i} X^{(j)} &\to C^i A^{h-i} X^{(j+1)} \\
C^i B^{h-i} X^{(m-2)} &\to C^i A^{h-i} B
\end{align*}
for $i = 0,\ldots,h$ and $j=1,\ldots,m-3$. Using these rewrite rules, we can construct
a PFA on $n = (m+1)k$ states $A_i,X_i,B_i,C_i$ for $i = 1,\ldots,k$, where $X_i$
is a sequence of $m-2$ states. We describe the symbols $s$, $c$, and $r$ by their actions
on some $(m+1)$-tuples, e.g.\@
$(A_i,X_i,B_i,C_i) \, s = (A_i s,X_i^{(1)} s,\ldots,X_i^{(m-2)} s,B_i s,C_i s)$
if $X_i = X_i^{(1)}, X_i^{(2)}, \ldots, X_i^{(m-2)}$:
\begin{align*}
(A_i,X_i,B_i,C_i) \, s &= \begin{cases}
(C_i,\ldots,C_i,C_i), & \text{if $i = 1,\ldots,h$} \\
(A_i,A_i,\ldots,A_i), & \text{otherwise}
\end{cases} \\
(A_i,X_i,B_i,C_i) \, c &= \begin{cases}
(A_1,X_1,B_1,C_1), & \text{if $i = k$} \\
(A_{i+1},X_{i+1},B_{i+1},C_{i+1}), & \text{otherwise}
\end{cases} \\
(A_i,X_i,B_i,C_i) \, r &= \begin{cases}
(\bot,\ldots,\bot,A_i,C_i), & \text{if $i = 1,\ldots,h$} \\
(X_i,B_i,\bot,B_{i-1}), & \text{if $i = h+1$} \\
(A_i,X_i,B_i,C_i), & \text{otherwise}
\end{cases}
\end{align*}
In the proof of Lemma \ref{lemrewrlen}, we replace $\fib$ by a function $f$
for which
\begin{gather*}
f(1) = f(2) = \cdots = f(h) = \tfrac{1}{(m-1)h-1} \qquad \mbox{and} \\
f(i+h) = (m-1) \cdot \big( f(i) + f(i+1) + \cdots + f(i+h-1)\big)
\end{gather*}
for all $i \ge 1$. Next, we take
$$
W(u) = \sum_{j=1}^{m-2} j \cdot \sum_{i : u_i = X^{(j)}} \big(f(i) - \tfrac{1}{(m-1)h-1}\big)
+ (m-1) \cdot \sum_{i : u_i = B} \big(f(i) - \tfrac{1}{(m-1)h-1}\big).
$$
Just like in the proof of Lemma \ref{lemrewrlen}, the smallest possible number of steps
for rewriting $C^hA^{k-h}$ to a string ending in $B$ is exactly $f(k) - \tfrac{1}{(m-1)h-1}$.

To see that $\Omega((m-\epsilon)^k)$ is obtainable, take
$$
h \geq \frac{\log(m-1) - \log \epsilon}{\log (m-\epsilon)}
$$
We prove that $f(i) \geq \tfrac{1}{(m-1)h-1} (m-\epsilon)^{i-h}$ for all $i$.
It is satisfied if $1 \le i \le h$. By induction, we deduce that
\begin{align*}
f(i+h) &= (m-1) \cdot \big( f(i) + f(i+1) + \cdots + f(i+h-1)\big) \\
&\geq \tfrac{m-1}{(m-1)h-1} \cdot \big((m-\epsilon)^{i-h} + (m-\epsilon)^{i+1-h} + \cdots +
       (m-\epsilon)^{i-1}\big) \\
&= \tfrac{m-1}{(m-1)h-1} \cdot \frac{(m-\epsilon)^i - (m-\epsilon)^{i-h}}{(m-\epsilon) - 1} \\
&= \tfrac{1}{(m-1)h-1} (m-\epsilon)^i \cdot (m-1)
       \frac{1-(m-\epsilon)^{-h}}{(m-\epsilon) - 1} \\
&\geq \tfrac{1}{(m-1)h-1} (m-\epsilon)^i \cdot (m-1)
       \frac{1-\tfrac{\epsilon}{m-1}}{(m-\epsilon) - 1} \\
&= \tfrac{1}{(m-1)h-1} (m-\epsilon)^i
\end{align*}
so $f(i) \geq \tfrac{1}{(m-1)h-1} (m-\epsilon)^{i-h}$ for all $i$ indeed.

Now take $m = 4$. Then $n = 5k$, and it takes
$$
f(k) - \tfrac{1}{(m-1)h-1} = \Omega((m-\epsilon)^k) =
\Omega((4 - \epsilon)^{n/5}) = \Omega((2 - \epsilon)^{2n/5})
$$
steps to obtain a string ending in $B$.

To improve the construction with 2 symbols, we replace the start symbol $s$
by a symbol $s'$, to obtain a bigger set $Q'$ in Theorem \ref{lem2sym}:
$$
(A_i,X_i,B_i,C_i) \, s' = \begin{cases}
(C_i,\ldots,C_i,C_i), & \text{if $i = 1,\ldots,h$} \\
(A_i,A_i,\ldots,A_i), & \text{if $i = k-3, k-2, k-1, k$} \\
(A_i,X_i,B_i,A_i), & \text{otherwise}
\end{cases}
$$
Indeed, we can take
$$
Q' = \{A_{h+2},A_{h+3},\ldots,A_k,X_{h+2},X_{h+3},\ldots,X_{k-4},
       B_{h+2},B_{h+3},\ldots,B_{k-4}\}
$$
so that $Q'$ contains $m (k-h-5) + 4$ states. Hence
$$
n' = n - |Q'| = (m+1)k - (m (k-h-5) + 4) = k + m(h+5) - 4 = k + O(1),
$$
and we obtain a PFA on $n + n' = (m+1)k + k + O(1) = (m+2)k + O(1)$
states and two symbols.

Since $s = s'(c^{k-1}s')^k$, we deduce that the PFA with symbols $s',c,rc$
is synchronizing. We will show below that the shortest carefully
synchronizing word length of the PFA with symbols $s',c,rc$ is
$\Omega((m-\epsilon)^k)$, just as for $s',c,rc$.
Now take $m = 4$. For $n$ being the number of states of the PFA with two symbols,
the length of the shortest synchronizing word is
$$
\Omega\big((m-\epsilon)^{(n - O(1))/(m+2)}\big)
= \Omega((4-\epsilon)^{n/6}) = \Omega((2 - \epsilon)^{n/3})
$$

So it remains to show that replacing $s$ by $s'$ does not change
the estimate $\Omega((m-\epsilon)^k)$ for the shortest carefully
synchronizing word length of the PFA. As opposed to $s$, we can abuse
$s'$ to get from $Q$ to $S(C^hB^{k-h-4}A^4)$ in polynomially many steps
in the power automaton, but we cannot cheat beyond that.
Hence it suffices to show that
$$
W(C^hA^{k-h-1}B) - W(C^hB^{k-h-4}A^4) \ge
\tfrac12 \big(W(C^hA^{k-h-1}B) - W(C^hA^{k-h})\big)
$$
which is equivalent to
$$
W(C^hB^{k-h-4}A^4) \le \tfrac12 W(C^hA^{k-h-1}B)
$$
Since $W(C^hA^{k-h-3}BA^2) \le
\tfrac12 W(C^hA^{k-h-1}B)$, we see that it suffices to show that
$W(C^hB^{k-h-4}A^4) \le W(C^hA^{k-h-3}BA^2)$, i.e.\@
\begin{equation} \label{Weq}
W(C^hB^iA^{k-h-i}) \le W(C^hA^{i+1}BA^{k-h-i-2})
\end{equation}
for $i = k-h-4$.

We prove \eqref{Weq} by induction on $i$.
It is clear that \eqref{Weq} holds for $i = 0$ and $i = 1$.
By induction, we deduce that
\begin{align*}
W(C^hB^{i+2}A^{k-h-i-2})
&= W(C^hB^{i}A^{k-h-i}) + W(C^hA^{i}B^2A^{k-h-i-2}) \\
&\le W(C^hB^{i}A^{k-h-i}) + W(C^hA^{i+2}BA^{k-h-i-3}) \\
&\le W(C^hA^{i+1}BA^{k-h-i-2}) + W(C^hA^{i+2}BA^{k-h-i-3}) \\
&= W(C^hA^{i+1}B^2A^{k-h-i-3}) \\
&\le W(C^hA^{i+3}BA^{k-h-i-4}),
\end{align*}
so \eqref{Weq} holds if $0 \le i \le k-h-2$.}{For further improvements to $\Omega((4 - \epsilon)^{n/5}) =
\Omega((2 - \epsilon)^{2n/5})$ for three symbols and $\Omega((4 - \epsilon)^{n/6}) =
\Omega((2 - \epsilon)^{n/3})$ for two symbols, we refer to the extended version
\cite{BDZ17}.}

\section{Conclusions}
\label{secconcl}
For every $n$ we constructed a PFA on $n$ states and 3 symbols for which careful synchronization is forced to mimic rewriting with respect to a string rewriting system. This system requires an exponential number of steps to reach a string of a particular shape. The resulting exponential synchronization length is much larger than the cubic upper bound for synchronization length of DFAs. We show that for $n=4$ the shortest synchronization length for a PFA already can exceed the maximal shortest synchronization length for a DFA. For $n=4,5,6$ we found greatest possible shortest synchronization lengths, both for DFAs and PFAs, where for DFAs until now this was only fully investigated for $n \leq 4$, that is, by not assuming any bound on the number of symbols. Both for DFAs and PFAs better techniques are needed to do the same analysis for $n=7$ or higher.

\bibliography{ref}

\end{document}